%% file: ramsey.tex
\documentclass{amsart}

\usepackage{graphicx,tikz,amssymb,mathtools,tabularx,algorithm,enumerate,enumitem}
\usetikzlibrary{arrows,positioning,calc}
\usepackage[justification=centering]{caption}
\usepackage[noend]{algpseudocode}

\tikzset{
    vertex/.style={circle, fill=black, inner sep=1.6pt},
    newvertex/.style={circle, draw=black, fill=yellow, inner sep=1.6pt},
    blueedge/.style={blue, very thick},
    rededge/.style={red, very thick},
    candidate/.style={gray, dashed, thick}
}

\usetikzlibrary{decorations.pathreplacing}

\makeatletter
\def\BState{\State\hskip-\ALG@thistlm}
\makeatother

\newtheorem{theorem}{Theorem}[section]
\newtheorem{lemma}[theorem]{Lemma}
\newtheorem{prop}[theorem]{Proposition}
\newtheorem{corr}[theorem]{Corollary}

\author{Sam Beilis}
\address{Department of Mathematical Sciences, Kean University, Union, NJ 07083}
\curraddr{Department of Mathematics, University of Chicago, Chicago, IL 60637}
\email{sbeilis@uchicago.edu}

\author{Israel R. Curbelo}
\address{Department of Mathematical Sciences, Kean University, Union, NJ 07083}
\email{israel.curbelo@kean.edu}

\author{Elizabeth R. Koizumi}
\address{Department of Mathematics, Cornell University, Ithaca, NY 14853}
\email{erk87@cornell.edu}

\title[Online Ramsey Numbers of Stars versus Long Paths and Cycles]{Asymptotic Bounds for Online Ramsey Numbers of Stars versus Long Paths and Cycles}

\begin{document}

\begin{abstract}
The online Ramsey game for graphs $G$ and $H$ is played on the infinite complete graph $K_\mathbb{N}$. In each round, Builder chooses an edge, and Painter colors it red or blue. The online Ramsey number $\tilde{r}(G,H)$ is the smallest integer $t$ for which Builder has a strategy guaranteeing a red copy of $G$ or a blue copy of $H$ within $t$ rounds. For every fixed integer $k\ge4$, the best-known lower bounds for $\tilde{r}(K_{1,k},P_n)$ and $\tilde{r}(K_{1,k},C_n)$ are
\(
\left(\frac{k+3}{4}+o(1)\right)n
\)
as $n\to\infty$. We improve the corresponding asymptotic upper bounds from
\(
(k+o(1))n
\)
to
\(
\left(\frac{2k+4}{5}+o(1)\right)n
\)
as $n\to\infty$.
\end{abstract}

\maketitle

\input{introduction}

\input{poof}

\input{concluding}

\bibliographystyle{acm}
\bibliography{ramsey}

\end{document}

%% file: introduction.tex
\section{Introduction}

The online Ramsey problem for graphs $G$ and $H$ is defined as a two-player game between Builder and Painter. The game is played in rounds on the infinite complete graph $K_\mathbb{N}$. Each round, Builder chooses an edge and Painter, immediately and irrevocably, colors the edge red or blue. The \emph{online Ramsey number} $\tilde{r}(G,H)$ is the smallest integer $t$ for which Builder has a strategy that guarantees a red copy of $G$ or a blue copy of $H$ in at most $t$ rounds, regardless of the choices that Painter makes.
The online Ramsey number $\tilde{r}(G,H)$ is always bounded above by the \emph{size Ramsey number} $\hat{r}(G,H)$ introduced by Erd\H{o}s, Faudree, Rousseau, and Schelp \cite{efrs-78} which is the smallest integer $t$ for which there exists a graph $G$ with $t$ edges such that every 2-edge-coloring of $G$ with colors red and blue results in a red copy of $G$ or a blue copy of $H$. However, this bound is often far from optimal. 

The study of online Ramsey numbers involving stars, paths, and cycles
was initiated by Grytczuk, Kierstead, and Pra{\l}at \cite{gry-kie-pra-08}, who
obtained general bounds for $\tilde{r}(K_{1,k},H)$. In particular, their
results imply
\[
\frac{1}{2}\left\lfloor\frac{n}{2}\right\rfloor(k-1)+n-1
\le
\tilde{r}(K_{1,k},P_n)
\le
k(n-2)+1
\]
and
\[
\frac{k+3}{4}n
\le
\tilde{r}(K_{1,k},C_n)
\le
kn.
\]
Consequently, for fixed $k$ and $n\to\infty$,
\[
\left(\frac{k+3}{4}+o(1)\right)n
\le
\tilde{r}(K_{1,k},P_n),\,
\tilde{r}(K_{1,k},C_n)
\le
(k+o(1))n.
\]
Thus, even for a fixed star, the general bounds leave a substantial gap
between the lower and upper bounds.

Subsequent work has determined the asymptotic behavior, and in fact the
exact values, in the first nontrivial case $k=3$. Grytczuk, Kierstead,
and Pra{\l}at \cite{gry-kie-pra-08} showed that
\[
\tilde{r}(K_{1,3},P_n)
\ge
\left\lfloor\frac{3n}{2}\right\rfloor-1.
\]
Latip and Tan \cite{lat-tan-21} later proved
\[
\tilde{r}(K_{1,3},P_n)
\le
\frac{5}{3}n+O(1)
\]
and conjectured that
\[
\tilde{r}(K_{1,3},P_n)
=
\left(\frac{3}{2}+o(1)\right)n.
\]
This conjecture was resolved by Song, Wang, and Zhang \cite{son-wan-zha-25},
who proved that
\[
\tilde{r}(K_{1,3},P_n)
=
\left\lfloor\frac{3n}{2}\right\rfloor
\qquad\text{for all } n\ge 2.
\]

More recently, Zhi and Zhang \cite{zhi-zha-26} determined the corresponding
online Ramsey number for long cycles, proving that
\[
\tilde{r}(K_{1,3},C_n)
=
\left\lfloor\frac{3(n+1)}{2}\right\rfloor
\qquad\text{for all } n\ge 13.
\]
Hence, for $k=3$, both the path and cycle versions have asymptotic
coefficient $3/2$, showing that the general lower bound of Grytczuk,
Kierstead, and Pra{\l}at is asymptotically sharp in this case.

For arbitrary fixed $k$, however, much less is known. In particular,
the general bounds above leave a gap between the coefficient
$(k+3)/4$ in the lower bound and the coefficient $k$ in the upper
bound for both
\[
\tilde{r}(K_{1,k},P_n)
\qquad\text{and}\qquad
\tilde{r}(K_{1,k},C_n).
\]
Determining the asymptotic behavior of these quantities for general
fixed $k$ therefore remains a natural problem.

Beilis and Curbelo \cite{bei-cur-26} showed that for each
fixed $k$, the path and cycle problems have the same asymptotic behavior.
Consequently, any asymptotic upper bound for $\tilde{r}(K_{1,k},P_n)$ yields the
same asymptotic upper bound for $\tilde{r}(K_{1,k},C_n)$.

\begin{prop}\label{prop:limit}
    For every positive integer $k$, there is an $L_k>0$ such that \[\lim_{n\to\infty}\frac{\tilde{r}(K_{1,k},P_n)}{n}=L_k=\lim_{n\to\infty}\frac{\tilde{r}(K_{1,k},C_n)}{n}.\]
\end{prop}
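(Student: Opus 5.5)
We will prove the statement in two parts. First we show that $\tilde{r}(K_{1,k},P_n)/n$ converges, via a subadditivity argument and Fekete's lemma. Then we show that the cycle version differs from the path version by only $o(n)$.

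\textbf{Step 1 (approximate subadditivity for paths).} We aim to show that there is a constant $c_k$, depending only on $k$, such that
\[
\tilde{r}(K_{1,k},P_{m+n})\le \tilde{r}(K_{1,k},P_m)+\tilde{r}(K_{1,k},P_n)+c_k
\]
for all $m,n$.

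Builder first plays an optimal strategy for $P_m$ on a fresh set of vertices. If a red $K_{1,k}$ appears, Builder is done. Otherwise Builder obtains a blue $P_m$, say with endpoint $x$. Builder then plays an optimal strategy for $P_n$ on disjoint fresh vertices and obtains a blue $P_n$ with endpoint $y$.

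To join the two paths, Builder repeatedly offers edges from $x$ to many fresh vertices. If $k-1$ of these are colored red, then, together with earlier red edges at $x$, Builder can try to force a red star. The difficulty is that we need a blue connection between an endpoint of one path and an endpoint of the other.

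A cleaner version of the joining step is this. Builder offers the edge $xy$. If it is blue, the two paths join. If it is red, Builder swaps to the other endpoint of one path and offers that edge. Since every vertex has red degree less than $k$, after $O(k^2)$ such attempts using the endpoints of short terminal segments, a blue joining edge must appear. The cost is a loss of only $O(k)$ path vertices. This loss should be absorbed by padding: we play $P_{m+c}$ and $P_{n+c}$ in place of $P_m$ and $P_n$.

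This yields $f(m+n)\le f(m)+f(n)+C$ with $f(n)=\tilde{r}(K_{1,k},P_n)$ shifted by a constant. Fekete's lemma then gives existence of $\lim f(n)/n=\inf (f(n)+C)/n$. The lower bound $\tilde{r}(K_{1,k},P_n)\ge \frac{k+3}{4}n$ shows that this limit $L_k$ is positive.

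\textbf{Step 2 (cycles versus paths).} We first note that $\tilde{r}(K_{1,k},P_n)\le\tilde{r}(K_{1,k},C_{n+1})+O(1)$, since a blue $C_{n+1}$ contains a blue $P_n$. Hence the liminf for cycles is at least $L_k$.

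For the upper bound, Builder builds a blue $P_{n'}$ with $n'=n-o(n)$ and then closes it into a cycle of length exactly $n$. To do this, Builder takes a long blue path, uses the $o(n)$ rounds to build several short blue "absorber" paths, and then forces a blue edge between near-endpoint vertices, in the same way as the joining argument of Step 1. Because red degrees are bounded by $k-1$, a blue closing edge among any $k$ candidate endpoints is forced within $O(k^2)$ rounds.

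Exact length control is the delicate part. Our plan is to close a cycle through an endpoint segment that is chosen so that the resulting cycle length can be adjusted by a bounded amount. We then compensate by playing $P_{n+O(k)}$ and discarding the excess from the tail, which is possible because the unused tail vertices need not lie on the cycle.

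\textbf{Main obstacle.} We expect the main obstacle to be the joining/closing step with exact length. Painter may color red every edge between the current endpoints. We must therefore ensure that Builder has enough alternative endpoints, such as the last $k$ vertices of each path, each of which can be made an endpoint by trimming. We must also make sure that trimming costs only $O(k)$ vertices, and that the only remaining failure mode forces a red $K_{1,k}$ at some vertex.
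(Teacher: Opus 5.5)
\textbf{Step 1 and the lower half of Step 2 are fine; the upper half of Step 2 has a genuine gap (exact cycle length).}

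\textbf{Step 1.} This works once made precise. Expose the $k^2$ edges between the last $k$ vertices of the two blue paths. A blue edge $x_{m-i}y_{n-j}$ with $0\le i,j\le k-1$ is forced, since otherwise each of these $2k$ vertices acquires $k$ red edges. This gives a blue path on at least $m+n-2k+2$ vertices. A longer blue path contains a shorter one, so it does not matter which chord Painter makes blue. With $g(n)=\tilde r(K_{1,k},P_{n+2k-2})+k^2$ you get $g(m+n)\le g(m)+g(n)$, and Fekete's lemma applies. Positivity of the limit follows simply from $\tilde r(K_{1,k},P_n)\ge n-1$. The paper does not prove this proposition itself; it quotes it from Beilis and Curbelo. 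Its joining lemma, however, contains exactly the idea you are missing.

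\textbf{Step 2, upper half.} Exact length is essential here, because a blue $C_{n+1}$ does not contain a $C_n$, and your closing mechanism cannot control the length. A blue chord $x_ix_j$ ($i<j$) of the blue path $x_1\cdots x_N$ produces the cycle $x_ix_{i+1}\cdots x_jx_i$ of length $j-i+1$. Only the tails are discarded, never the interior.
\begin{itemize}
\item If you expose chords between $k$ near-start and $k$ near-end vertices, a blue chord is forced, but Painter chooses which one. You only get some length in $[N-2k+2,N]$.
\item If you expose only chords of the correct length, $x_ix_{i+n-1}$, these form a matching when $N\le 2n-2$. Painter can color all of them red while raising each red degree by at most one, so the red-degree bound forces nothing.
\end{itemize}
Padding to $P_{n+O(k)}$, or adding absorber paths attached by the same Painter-chosen chords, runs into the same problem.

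\textbf{The missing idea.} Close through \emph{fresh} vertices, as in Lemma~\ref{lem:fus}.
\begin{enumerate}
\item Build a blue path $x_1\cdots x_{n-1}$.
\item Expose edges from $x_1$ to $2k-1$ new vertices. Unless $x_1$ becomes the center of a red $K_{1,k}$, some $k$ of them, forming a set $U$, are blue neighbours of $x_1$.
\item Expose all edges from $x_{n-1}$ to $U$. Unless $x_{n-1}$ becomes a red star center, some $x_{n-1}w$ with $w\in U$ is blue.
\end{enumerate}
Then $wx_1x_2\cdots x_{n-1}w$ is a blue $C_n$, whichever $w$ Painter chose. This gives $\tilde r(K_{1,k},C_n)\le \tilde r(K_{1,k},P_{n-1})+3k-1$. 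Combined with $\tilde r(K_{1,k},P_n)\le\tilde r(K_{1,k},C_n)$, it shows the two numbers differ by at most $3k-1$. Equality of the limits follows immediately, with no $o(n)$ bookkeeping.

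The same gadget also gives a cleaner subadditivity for Step 1: $\tilde r(K_{1,k},P_{m+n+1})\le \tilde r(K_{1,k},P_m)+\tilde r(K_{1,k},P_n)+3k-1$.
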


In this paper, we improve the asymptotic upper bound of the online Ramsey number $\tilde{r}(K_{1,k},P_n)$. 
\begin{theorem}\label{mainthm}
    For every fixed positive integer $k$, as $n\to\infty$,
    \[
    \tilde{r}(K_{1,k},P_n)
    \le
    \left(\frac{2k+4}{5}+o(1)\right)n.
    \]
\end{theorem}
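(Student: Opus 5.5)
We give Builder an explicit strategy and analyse it with a potential (amortized accounting) argument. Throughout, Builder maintains a blue path $P$, its current endpoint $v$, and a pool of fresh vertices. Every vertex carries a red degree, which must stay at most $k-1$; otherwise Painter has already lost. Each vertex can therefore absorb at most $k-1$ red edges over the whole game. The accounting aim is to charge every Builder move either to a unit of extension of the blue path, or to a red edge whose cost is shared among vertices that all end up on the final path. If we can show that, on average, extending the path by one vertex costs at most $\frac{2k+4}{5}$ moves plus lower-order terms, Theorem~\ref{mainthm} follows. By Proposition~\ref{prop:limit} it suffices to do this for paths; the cycle bound then comes for free, and we may allow $o(n)$ wasted moves, for example to handle boundary effects or to discard a sublinear number of vertices.

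\textbf{Baseline strategy.} The naive strategy of Grytczuk, Kierstead and Pra{\l}at costs $k$ per vertex. Builder repeatedly offers edges from $v$ to fresh vertices, and Painter may answer red $k-1$ times before a blue edge appears. The source of inefficiency is that each red edge spends red degree at $v$ but also at a fresh vertex $u$, and that spent degree at $u$ is never used again.

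The improvement will reuse these partially saturated vertices. Builder keeps a reservoir of vertices with positive red degree. Whenever possible, Builder probes edges from $v$ to reservoir vertices instead of to fresh ones, so that every red answer consumes red capacity at two vertices that both eventually lie on $P$. Builder also uses short local gadgets that let the path continue without creating long blue side branches:
\begin{itemize}[label={}]
\item \emph{Blue cherries:} if $v$ receives a blue edge to some $u$ but also already has a blue edge to some $w$, Builder joins pieces into a longer path.
\item \emph{Rerouting:} Builder chooses which of two blue neighbours becomes the new endpoint.
\end{itemize}

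\textbf{Analysis.} We define a potential
\[
\Phi = \alpha|P| + \beta\sum_{x}\deg_{\mathrm{red}}(x) + \gamma\cdot(\text{number of blue pieces}),
\]
with constants to be tuned. We then verify, case by case over Painter's responses within one ``phase'' (from one extension to the next), that
\[
\text{moves} \le \Delta\Phi,
\]
where $\Phi$ is bounded at the end by $\frac{2k+4}{5}n + o(n)$. The constants come from solving the small linear program in which the worst phases are tight. I expect the denominator $5$ to arise from a phase combining two red probes with a length-$2$ blue extension, or something similar.

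\textbf{Main obstacle.} The hardest part is the case analysis guaranteeing that reservoir vertices actually get incorporated into $P$, rather than piling up with red degree close to $k-1$ and never being used. Painter may respond blue to reservoir probes exactly when it harms Builder, for instance by creating blue edges that branch off the path.

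My first attempt would be to cap the reservoir size at $O(1)$, or at most $o(n)$. Builder would keep the path endpoint adjacent, in blue, to a short ``tail'' of flexible vertices. Any blue edge from a reservoir vertex can then be absorbed by a rotation (Pósa-type) of that tail. Vertices that still cannot be absorbed are discarded, which costs only $o(n)$ moves in total. Proving that this absorption always succeeds, with the claimed amortized cost, is the step most likely to require the careful, possibly computer-assisted, enumeration of configurations.
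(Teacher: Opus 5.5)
Your proposal is a plan, not a proof, and the gap is at the quantitative core. Builder's strategy is never pinned down: you do not say what is probed in a phase, when a reservoir vertex is preferred to a fresh one, or when a cherry or rotation is triggered. The constants in $\Phi$ are left to be tuned, and the inequality $\mathrm{moves}\le\Delta\Phi$ is not checked in a single case. Even the value $\frac{2k+4}{5}$ is only expected, not derived.

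The step you flag as the main obstacle is exactly where any gain would have to come from, and it is genuinely problematic. P\'osa rotations need blue edges from the endpoint back into the path. Painter can refuse these while charging you one round per refusal. Discarding vertices whose red degree is close to $k-1$ throws away up to $k-1$ paid rounds each, which is harmless only if you prove that just $o(n)$ vertices are discarded.

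Your diagnosis of the baseline is also off. The waste you describe disappears once Builder probes only inside a fixed pool of $n+k-1$ vertices (Lemma~\ref{lem:path}). Then all but $O(k^2)$ red edges join two vertices that end up on the path. The cost is therefore at most $(1+\mu/2)n+O(k^2)$, where $\mu$ is the average red degree along the path, hence at most $\frac{k+1}{2}n+O(k^2)$. Your reservoir idea is essentially this, and on its own it stops at $\frac{k+1}{2}$.

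The missing idea is how to profit when $\mu$ is \emph{large}. A path vertex that already has red degree $d_t$ needs only $k-d_t$ probes to be forced to a blue neighbour. Lemma~\ref{lem:dub} exploits this. Builder probes $v_t$ against the $k-d_t$ blue neighbours of $v_{t-1}$ chosen in the previous phase, together with $k-d_{t+1}$ fresh vertices. This places a new vertex before $v_1$, between each consecutive pair $v_{t-1},v_t$, and after $v_n$, turning $P_n$ into $P_{2n+1}$ in $2kn-2\sum_i d_i$ rounds.

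The paper then uses a global dichotomy, made after the first path is complete and based on its average red degree, rather than a per-phase potential. Builder first builds $P_n$ greedily.
\begin{itemize}
\item If $\mu$ is large, Builder doubles the path, for a total of about $(2k+1-\frac32\mu)n$ rounds.
\item If $\mu$ is small, the first path was cheap. Builder builds a second $P_n$ with the current best bound $(\beta k+c)n$ and joins the two paths in $3k-1$ rounds via Lemma~\ref{lem:fus}, for a total of about $(\beta k+c+1+\frac12\mu)n$.
\end{itemize}
Balancing at $\mu=k-\frac{\beta k}{2}-\frac{c}{2}$ and dividing by $2n$ gives the recursion $\beta'=\frac38\beta+\frac14$, $c'=\frac38c+\frac12$, starting from $(\beta,c)=(\frac12,\frac12)$. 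Its fixed point $(\frac25,\frac45)$ yields $\frac{2k+4}{5}$.

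So the denominator $5$ comes from $1-\frac38=\frac58$ in a self-improving iteration, not from a tight local configuration. Nothing in your plan supplies either the doubling step or the recursive bootstrapping needed to beat $\frac{k+1}{2}$.
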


As a consequence of Proposition \ref{prop:limit}, we also improve the asymptotic upper bound of the online Ramsey number $\tilde{r}(K_{1,k},C_n)$. 
\begin{corr}\label{maincorr}
    For every fixed positive integer $k$, as $n\to\infty$,
    \[\tilde{r}(K_{1,k},C_{n})\le \left(\frac{2k+4}{5}+o(1)\right)n.\]
\end{corr}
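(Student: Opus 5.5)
The plan is to deduce Corollary \ref{maincorr} directly from Theorem \ref{mainthm} and Proposition \ref{prop:limit}, with no new Builder strategy. The point is that Proposition \ref{prop:limit} says the normalized path and cycle online Ramsey numbers have a common limit $L_k$. So it suffices to show that $L_k\le\frac{2k+4}{5}$, and then transfer this bound to cycles.

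\textbf{Step 1: bound $L_k$.} Fix a positive integer $k$ and let $L_k$ be the limit from Proposition \ref{prop:limit}. Theorem \ref{mainthm} provides a function $\varepsilon(n)\to 0$ with
\[
\frac{\tilde{r}(K_{1,k},P_n)}{n}\le \frac{2k+4}{5}+\varepsilon(n)\qquad\text{for all } n.
\]
Let $n\to\infty$ on both sides. The left side converges to $L_k$ by Proposition \ref{prop:limit}, and the right side converges to $\frac{2k+4}{5}$. Hence $L_k\le \frac{2k+4}{5}$.

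\textbf{Step 2: transfer to cycles.} Proposition \ref{prop:limit} also gives
\[
\frac{\tilde{r}(K_{1,k},C_n)}{n}\to L_k .
\]
Equivalently, $\tilde{r}(K_{1,k},C_n)=(L_k+o(1))n$. Combining this with Step 1,
\[
\tilde{r}(K_{1,k},C_n)\le\left(\frac{2k+4}{5}+o(1)\right)n,
\]
which is the claimed bound.

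\textbf{Possible obstacles.} There is no real obstacle beyond invoking the two earlier results in the right order. The only thing to check is that the quantifiers match: both results are stated for every fixed positive integer $k$, including the small cases $k=1,2$, so no case split is needed. The $o(1)$ term in the corollary is then simply $\frac{\tilde{r}(K_{1,k},C_n)}{n}-L_k$ plus the nonnegative slack $\frac{2k+4}{5}-L_k$ absorbed into the bound. All the substantive work lies in the proofs of Theorem \ref{mainthm} and of Proposition \ref{prop:limit}, which we assume.
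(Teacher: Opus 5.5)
Your proposal is correct and follows the paper's route: Theorem~\ref{mainthm} together with Proposition~\ref{prop:limit} gives $L_k\le\frac{2k+4}{5}$, and the common limit then carries the path bound over to cycles. The only small adjustment is that Theorem~\ref{mainthm} yields the inequality for all sufficiently large $n$ rather than for all $n$, and that is all your limit argument needs.
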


The remainder of the paper is devoted to the proof of Theorem~\ref{mainthm}. In Section~2, we develop several Builder strategies for constructing and extending long blue paths while controlling the red degrees of their vertices. These ingredients are then combined in an iterative argument that yields the stated asymptotic bound.

%% file: poof.tex
\section{Proof of Theorem}

We begin with three auxiliary results that will be used in the proof of our main theorem. The first shows that two disjoint blue paths can be joined at a cost depending only on $k$.

\newcommand{\kk}{K_{1,k}}

\begin{figure}[ht]
\centering
\input{fig1.tikz}
\caption{The joining strategy in Lemma~\ref{lem:fus}. If Builder obtains
blue edges $vw$ and $uw$ for some new vertex $w$, then the two blue paths
are joined to form a blue copy of $P_{m+n+1}$.}
\label{fig:fusion}
\end{figure}

\begin{lemma}\label{lem:fus}
Let $m$, $n$, and $k$ be positive integers, and suppose that Builder has forced a blue copy of $P_n\sqcup P_m$. Then Builder can force either a red copy of $\kk$ or a blue copy of $P_{m+n+1}$ in at most $3k-1$ additional rounds.
\end{lemma}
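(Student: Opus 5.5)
The plan is to join the two paths through a single fresh vertex. Let $v$ be an endpoint of the blue $P_n$ and $u$ an endpoint of the blue $P_m$. If Builder ever obtains blue edges $vw$ and $wu$ for some vertex $w$ outside both paths, then
\[
(\text{the } P_n \text{ ending at } v),\; w,\; (\text{the } P_m \text{ starting at } u)
\]
is a blue path on $n+1+m$ vertices, as in Figure~\ref{fig:fusion}. Builder forces such a $w$ while charging every red edge to $v$ or to $u$. Since a red $\kk$ appears as soon as either vertex has red degree $k$, the number of red edges is bounded.

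\emph{Strategy.} Builder repeats the following attempt. He takes a fresh vertex $w$ of $K_\mathbb{N}$, not yet touched, and plays $vw$.
\begin{itemize}
\item If $vw$ is red, the red degree of $v$ increases by one. Builder discards $w$ and tries again with another fresh vertex.
\item If $vw$ is blue, Builder plays $wu$. If $wu$ is blue, the required blue $P_{m+n+1}$ exists and Builder stops.
\item If $wu$ is blue-red, that is, $vw$ blue but $wu$ red, the red degree of $u$ increases by one. Builder abandons $w$ and starts a new attempt with a fresh vertex.
\end{itemize}
Because each $w$ is fresh, every red edge of the first kind is a new red edge at $v$. Every red edge of the second kind is a new red edge at $u$. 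No edge is counted twice.

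\emph{Counting.} Suppose no red $\kk$ has appeared. Then at most $k-1$ red edges are incident to $v$, so at most $k-1$ rounds are spent on red edges $vw$. Call an attempt in which $vw$ is blue but $wu$ is red a failed blue attempt. Each such attempt costs $2$ rounds and adds one red edge at $u$. Hence there are at most $k-1$ failed blue attempts before the red degree of $u$ would reach $k$. So there are at most $k$ blue attempts in total, including the last one. That last attempt ends either with a blue $wu$, giving the blue $P_{m+n+1}$, or with a $k$-th red edge at $u$, giving a red $\kk$ centred at $u$. The total number of rounds is therefore at most
\[
(k-1) + 2k = 3k-1.
\]
Similarly, if $v$ reaches red degree $k$ at any point, a red $\kk$ centred at $v$ has appeared.

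\emph{Main obstacle.} The delicate point is the bookkeeping that gives exactly $3k-1$ rather than $3k$. One has to check that the $k$-th blue attempt does not need a further continuation, since its failure already produces the red star at $u$. One also has to check that red edges at $v$ and at $u$ come from disjoint sets of rounds, which holds because every candidate $w$ is a fresh vertex. Finally, using vertices $w$ outside both paths ensures that the concatenation is a genuine path, with no repeated vertices.
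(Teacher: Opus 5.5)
Your proof is correct and is essentially the paper's argument. Both join the paths through a fresh vertex $w$ with $vw$ and $wu$ blue, and the worst case costs $2k-1$ edges at $v$ ($k-1$ red, $k$ blue) plus $k$ edges at $u$. The only difference is the order of queries: you interleave them one candidate at a time, whereas the paper first exposes all $2k-1$ edges from $v$ to a fresh set and then all $k$ edges from $u$ to $k$ of the blue neighbours. The worst-case count is the same; in your version, the case where the red star appears at $v$ costs at most $k+2(k-1)=3k-2$ rounds, which you could state explicitly.
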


\begin{proof}
Let $v$ and $u$ be end vertices of the blue copies of $P_n$ and $P_m$, respectively, and let $V$ be a set of $2k-1$ isolated vertices. Builder chooses all edges between $v$ and $V$. If at least $k$ of these edges are colored red, then they form a red copy of $\kk$ centered at $v$. Thus, we may assume that at least $k$ of these edges are blue. Let $U\subseteq V$ be a set of $k$ vertices joined to $v$ by blue edges.

Builder now chooses all edges between $u$ and $U$. If all $k$ of these edges are colored red, then they form a red copy of $\kk$ centered at $u$. Otherwise, some edge $uw$, with $w\in U$, is blue. Since $vw$ is also blue, the two blue paths are joined through $w$, forming a blue copy of $P_{m+n+1}$.

In total, Builder uses at most
\[
(2k-1)+k=3k-1
\]
additional rounds.
\end{proof}

The next lemma gives a simple strategy for constructing a long blue path. In addition to the total number of rounds, we keep track of the red degrees of the vertices on the resulting path. This refined estimate will be useful when the average red degree is large.

\begin{lemma}\label{lem:path}
There is a Builder strategy that forces either a red copy of $\kk$ or a blue copy of $P_n$ in at most
\[
\frac{1}{2}(n+k-1)(k-1)+n-1
\]
rounds. Furthermore, if Builder obtains a blue copy of $P_n$ on vertices $v_1,\ldots,v_n$, and if $d_i$ denotes the red degree of $v_i$ for each $i\in[n]$, then the total number of rounds used is at most
\[
\frac{1}{2}\left[\sum_{i=1}^n d_i+(k-1)^2\right]+n-1.
\]
\end{lemma}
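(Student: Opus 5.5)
The plan is a greedy path-growing strategy with a reserve pool of $k-1$ off-path vertices, and a count of red edges by red degrees. Builder keeps a blue path $v_1\cdots v_j$ with current endpoint $v=v_j$, and a pool $W$ of exactly $k-1$ vertices disjoint from the path. Initially the path is a single fresh vertex $v_1$ and $W$ is a set of $k-1$ fresh vertices.

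In each round, Builder chooses an edge $vw$ where $w\in W$ is not yet joined to $v$ by a red edge. If $vw$ is blue, $w$ becomes $v_{j+1}$ and the new endpoint, and Builder puts a fresh vertex into $W$ to replace it. If $vw$ is red, Builder tries another $w\in W$. Suppose $v$ becomes red-adjacent to all $k-1$ vertices of $W$. Then Builder chooses an edge from $v$ to a fresh vertex $x$. If Painter colors it red, $v$ is the center of a red $\kk$. So $vx$ is blue, and $x$ extends the path (with $W$ unchanged). Thus every blue edge ever chosen becomes a path edge, and the path grows until it has $n$ vertices or a red $\kk$ appears.

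For the round count, exactly $n-1$ rounds produce blue edges. Every red edge Builder chooses has both endpoints among the path vertices and the current pool $W$. This holds because a vertex leaves $W$ only by joining the path, and a fresh vertex $x$ reached from a saturated endpoint joins the path at once. Hence the number of red rounds is half the total red degree of the vertices in $\{v_1,\dots,v_n\}\cup W$, namely
\[
\frac12\Bigl[\sum_{i=1}^n d_i+\sum_{w\in W} d(w)\Bigr].
\]
If no red $\kk$ has appeared, each vertex has red degree at most $k-1$. Since $|W|=k-1$, the pool contributes at most $(k-1)^2$. This gives the refined bound $\frac12[\sum d_i+(k-1)^2]+n-1$. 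Bounding each $d_i\le k-1$ then yields $\frac12[n(k-1)+(k-1)^2]+n-1=\frac12(n+k-1)(k-1)+n-1$, which is the first bound.

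The step needing the most care is verifying the invariant that no red edge ever touches a vertex outside the final path and the final pool. One checks two things. First, discarded vertices do not exist: pool vertices leave only into the path. Second, the fresh vertex queried from a saturated endpoint is never left hanging with a red edge, since a red answer there already ends the game with a red $\kk$.

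A game ending in a red star is handled by the first alternative of the statement. So the round bounds are needed only when Builder completes a blue $P_n$. In that case all red degrees are at most $k-1$, and the count above applies.
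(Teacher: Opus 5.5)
Your proof is correct and is essentially the paper's argument: the paper fixes a ground set $V$ of $n+k-1$ vertices and extends the path greedily inside it, and your replenished pool of $k-1$ vertices (with the fallback edge to a fresh vertex once the endpoint is red to all of $W$) is just an incremental way of realizing the same set, followed by the same handshake count over the $n$ path vertices plus $k-1$ extra vertices of red degree at most $k-1$. One correction: the first bound is not only needed when a blue $P_n$ is completed---it must also bound the length of a game that ends in a red $K_{1,k}$, and it is used that way in Theorem~\ref{thm:iterative}---but your count covers this too, since if the star appears at round $s$ while the path has $j\le n-1$ vertices, the first $s-1$ rounds contain $j-1$ blue edges and at most $\frac12(j+k-1)(k-1)$ red ones, so $s\le\frac12(j+k-1)(k-1)+j\le\frac12(n+k-1)(k-1)+n-1$.
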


\begin{proof}
Let $V$ be a set of $n+k-1$ vertices, and choose $v_1\in V$ as the initial active vertex. Suppose that Builder has constructed a blue path
\[
P^{(i)}=v_1\ldots v_i,
\]
with active vertex $v_i$. In Phase $i$, Builder exposes edges from $v_i$ to vertices of $V\setminus V(P^{(i)})$ until Painter colors one of them blue.

If Painter colors $k$ such edges red, then Builder obtains a red copy of $\kk$ centered at $v_i$. Thus, assuming that no red copy of $\kk$ is created, Painter colors at most $k-1$ of these edges red. Since
\[
|V\setminus V(P^{(i)})|=n+k-1-i\ge k
\]
for $i\le n-1$, Builder obtains a blue edge $v_iv_{i+1}$ for some
$v_{i+1}\in V\setminus V(P^{(i)})$. Builder then sets
\[
P^{(i+1)}=v_1\ldots v_{i+1}
\]
and takes $v_{i+1}$ to be the new active vertex. Hence, if no red copy of $\kk$ is created, Builder constructs a blue copy of $P_n$ after $n-1$ phases.

Now suppose that a blue copy of $P_n$ is obtained, and let $d_i$ denote the red degree of $v_i$. The only blue edges exposed are the $n-1$ edges of the path. The remaining $k-1$ vertices of $V$ each have red degree at most $k-1$, since otherwise a red copy of $\kk$ would be created. Therefore, the sum of the red degrees over all vertices of $V$ is at most
\[
\sum_{i=1}^n d_i+(k-1)^2.
\]
By the handshake lemma, the number of red edges is at most
\[
\frac{1}{2}\left[\sum_{i=1}^n d_i+(k-1)^2\right].
\]
Thus, the total number of rounds is at most
\[
\frac{1}{2}\left[\sum_{i=1}^n d_i+(k-1)^2\right]+n-1.
\]
Since $d_i\le k-1$ for every $i\in[n]$, this is at most
\[
\frac{1}{2}(n+k-1)(k-1)+n-1.
\]
\end{proof}

As an immediate consequence, we obtain the following initial upper bound.

\begin{corr}\label{corr:1} 
For every fixed positive integer $k$, as $n\to\infty$, \[ \tilde{r}(\kk,P_n) \le \left(\frac{k+1}{2}+o(1)\right)n. \] 
\end{corr}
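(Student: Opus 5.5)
Corollary~\ref{corr:1} follows by specializing the first bound of Lemma~\ref{lem:path} and normalizing by $n$. I will apply the Builder strategy of Lemma~\ref{lem:path} directly. No joining via Lemma~\ref{lem:fus} is needed at this stage. That strategy forces either a red copy of $\kk$ or a blue copy of $P_n$ within
\[
\frac{1}{2}(n+k-1)(k-1)+n-1
\]
rounds, so $\tilde{r}(\kk,P_n)$ is at most this quantity for every $n$ and $k$.

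Next I expand the bound with $k$ held fixed. The expression equals
\[
\frac{k-1}{2}\,n+n+\frac{(k-1)^2}{2}-1=\frac{k+1}{2}\,n+O_k(1).
\]
Dividing by $n$ gives
\[
\frac{\tilde{r}(\kk,P_n)}{n}\le\frac{k+1}{2}+\frac{(k-1)^2/2-1}{n}.
\]
Since $k$ is fixed, the final term tends to $0$ as $n\to\infty$, which is exactly the asserted bound $\left(\frac{k+1}{2}+o(1)\right)n$.

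The only point needing care is that the additive error $\frac{(k-1)^2}{2}-1$ depends on $k$ alone and not on $n$; this is why the statement assumes $k$ fixed. I expect no real obstacle, since the substantive work is already contained in Lemma~\ref{lem:path}. One could also note that for $k=1$ the bound reads $(1+o(1))n$, which is consistent with the trivial case.
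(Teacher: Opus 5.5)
Your proposal is correct and matches the paper's proof: invoke the first bound of Lemma~\ref{lem:path} and expand $\frac{1}{2}(n+k-1)(k-1)+n-1=\frac{k+1}{2}n+O_k(1)$ with $k$ fixed. Your explicit constant $\frac{(k-1)^2}{2}-1$ just spells out the $o(n)$ term that the paper leaves implicit.
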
 

\begin{proof} 
By Lemma~\ref{lem:path}, \[ \tilde{r}(\kk,P_n) \le \frac{1}{2}(n+k-1)(k-1)+n-1. \] Since $k$ is fixed as $n\to\infty$, \[ \frac{1}{2}(n+k-1)(k-1)+n-1 = \left(\frac{k+1}{2}+o(1)\right)n. \] 
\end{proof}

The preceding strategy becomes more efficient when the vertices of the blue path already have large red degree. The following lemma exploits this additional red structure to extend a blue copy of $P_n$ to a blue copy of $P_{2n+1}$.

\begin{figure}[ht]
\centering
\input{fig3.tikz}
\caption{The doubling strategy in Lemma~\ref{lem:dub}. This strategy efficiently doubles the length of the blue path when the average red degree of the vertices on the blue path is large.}
\label{fig:double}
\end{figure}

\begin{lemma}\label{lem:dub}
Let $n$ and $k$ be positive integers, and suppose that Builder has forced a blue copy of $P_n$ on vertices $v_1,\ldots,v_n$, where $v_i$ has red degree $d_i$ for each $i\in[n]$. Then Builder can force either a red copy of $\kk$ or a blue copy of $P_{2n+1}$ in at most
\[
2kn-2\sum_{i=1}^n d_i
\]
additional rounds.
\end{lemma}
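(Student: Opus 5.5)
The plan is to abandon the edges of the given blue path and build a new blue path that alternates between fresh vertices and the old path vertices:
\[
u_0\, v_1\, u_1\, v_2\, u_2\, \ldots\, u_{n-1}\, v_n\, u_n ,
\]
where $u_0,\ldots,u_n$ are previously isolated vertices. This path has $2n+1$ vertices. The budget comes from the remaining red capacity at each $v_i$. Since $v_i$ already has red degree $d_i$, Painter can color at most $k-1-d_i$ further edges at $v_i$ red before a red $\kk$ centered at $v_i$ appears. So the total number of new red edges incident to $v_1,\ldots,v_n$ is at most $\sum_{i=1}^n (k-1-d_i)$.

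The strategy builds the path in order.
\begin{enumerate}[label=(\roman*)]
\item \emph{Start.} Builder exposes edges from $v_1$ to fresh vertices until one, say $v_1u_0$, is blue.
\item \emph{Linking step.} For $i=1,\ldots,n-1$, Builder looks for a common blue neighbour $u_i$ of $v_i$ and $v_{i+1}$. He picks a fresh vertex $w$ and exposes $v_iw$.
\begin{itemize}
\item If $v_iw$ is red, he discards $w$.
\item If $v_iw$ is blue, he exposes $v_{i+1}w$. If that edge is blue, he sets $u_i=w$ and moves to the next $i$. If it is red, he discards $w$ and repeats with another fresh vertex.
\end{itemize}
\item \emph{End.} Builder exposes edges from $v_n$ to fresh vertices until one, $v_nu_n$, is blue.
\end{enumerate}
Each discarded vertex has at most one blue and one red edge, so it causes no red star, and it is never reused. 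Every new red edge is incident to some $v_j$, so the process cannot stall without creating a red $\kk$ at some $v_j$. As long as no red $\kk$ appears, it therefore terminates with the desired blue $P_{2n+1}$.

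The main step, and the only delicate one, is the accounting. I would split the exposed edges into three kinds.
\begin{itemize}
\item \emph{Red edges.} Each is charged to the unique $v_j$ it contains, so there are at most $R:=\sum_{i=1}^n(k-1-d_i)$ of them.
\item \emph{Wasted blue edges.} These are the edges $v_iw$ that are blue while $v_{i+1}w$ is red. Each is paired with a distinct red edge, so there are at most $R$ of them.
\item \emph{Path edges.} These are the blue edges that end up in the new path: one edge at each end and two for each of the $n-1$ links, giving $2+2(n-1)=2n$ edges.
\end{itemize}
The total number of additional rounds is therefore at most
\[
2R+2n = 2\sum_{i=1}^n (k-1-d_i) + 2n = 2kn - 2\sum_{i=1}^n d_i ,
\]
which is the claimed bound. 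The points to double-check are that the red capacity of each $v_i$ is shared correctly between its two linking steps, and that the end steps fit into the same charging scheme; both follow because the count $R$ is global.
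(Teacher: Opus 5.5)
Your proof is correct. It builds the same alternating path as the paper: the paper's $b_0^1 v_1 b_0^2 v_2 \cdots b_0^n v_n b_1^n$ is your $u_0 v_1 u_1 \cdots v_n u_n$, with each connector a common blue neighbour of consecutive $v_i$. What differs is how the number of rounds is controlled.

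The paper works in $n$ phases of fixed batches. In Phase $t$, Builder exposes all edges from $v_t$ to a pre-built pool $B_{t-1}$ of $k-d_t$ blue neighbours of $v_{t-1}$, and to $k-d_{t+1}$ fresh vertices. Pigeonhole against the remaining red capacity of $v_t$ then guarantees both a connector inside $B_{t-1}$ and a new pool $B_t$ of size $k-d_{t+1}$. Each phase costs $2k-d_t-d_{t+1}$ rounds, and the sum telescopes to $2kn-2\sum_i d_i+d_1-d_{n+1}$. The last step uses $d_1\le k-1=d_{n+1}$.

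You instead test one candidate at a time and use a global amortized count. Every failed attempt uses up one unit of red capacity at some $v_j$ and costs at most two rounds.

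\begin{itemize}
\item What your version buys: the strategy is a plain greedy procedure that never looks at the $d_i$, and it may finish early when Painter colours blue. The sharing of each $v_i$'s capacity between its two linking steps and the end steps is handled automatically by the single total $R$.
\item What the paper's version buys: a fixed per-phase cost and a purely local pigeonhole argument.
\end{itemize}

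The two bounds in fact coincide. Red edges at $v_1$ are never paired with a wasted blue edge, since the paired red edge is always $v_{i+1}w$ with $i+1\ge 2$. So your count sharpens to $2R+2n-(k-1-d_1)=2kn-2\sum_i d_i+d_1-(k-1)$. This is exactly the paper's expression before it discards $d_1-d_{n+1}$.
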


\begin{proof}
Set $d_{n+1}=k-1$, and let $B_0$ be a set of $k-d_1$ isolated vertices. We describe a Builder strategy consisting of $n$ phases.

For each $t\in[n]$, suppose that $B_{t-1}$ is a set of $k-d_t$ vertices, each joined to $v_{t-1}$ by a blue edge when $t\ge2$. In Phase $t$, Builder exposes all edges from $v_t$ to the vertices of $B_{t-1}$ and to a set of $k-d_{t+1}$ new isolated vertices. Thus, Builder exposes
\[
2k-d_t-d_{t+1}
\]
edges in this phase.

If Painter colors at least $k-d_t$ of these edges red, then together with the $d_t$ red edges already incident to $v_t$, Builder obtains a red copy of $\kk$. Hence, we may assume that at least
\[
k+1-d_{t+1}
\]
of the exposed edges are blue. Moreover, at least one blue edge joins $v_t$ to a vertex of $B_{t-1}$, since otherwise all $k-d_t$ edges between $v_t$ and $B_{t-1}$ would be red. Choose such a vertex and denote it by $b_0^t$. Choose an additional $k-d_{t+1}$ blue neighbors
\[
b_1^t,\ldots,b_{k-d_{t+1}}^t
\]
of $v_t$, and set
\[
B_t=\{b_1^t,\ldots,b_{k-d_{t+1}}^t\}.
\]

Assuming that no red copy of $\kk$ is created, after the $n$ phases the vertices
\[
b_0^1,v_1,b_0^2,v_2,\ldots,b_0^n,v_n,b_1^n
\]
form a blue copy of $P_{2n+1}$. The total number of additional rounds is
\begin{align*}
\sum_{i=1}^n(2k-d_i-d_{i+1})
&=2kn+d_1-d_{n+1}-2\sum_{i=1}^n d_i\\
&\le 2kn-2\sum_{i=1}^n d_i,
\end{align*}
where the final inequality follows from $d_1\le k-1=d_{n+1}$.
\end{proof}

We are now ready to prove the main upper bound. 
The proof is based on balancing two strategies for extending a blue path. Suppose Builder has constructed a blue copy of $P_n$ whose vertices have average red degree $\mu$. The construction of this path uses approximately
\[
\left(1+\frac{\mu}{2}\right)n
\]
rounds. Builder then proceeds in one of two ways. If $\mu$ is large, Lemma \ref{lem:dub} allows Builder to extend the path directly to a blue copy of $P_{2n+1}$ using approximately
\[
(2k-2\mu)n
\]
additional rounds. If $\mu$ is small, Builder instead uses the current upper bound to construct a second blue copy of $P_n$ and then joins the two paths using Lemma \ref{lem:fus}. Thus, up to lower-order terms, the total number of rounds is bounded by
\[
\left(1+\frac{\mu}{2}\right)n
+
\min\left\{
(2k-2\mu)n,\,
\text{current inductive bound}
\right\}.
\]
Choosing a threshold for $\mu$ so that the two resulting bounds agree yields an improved coefficient. Iterating this procedure gives the following theorem.

\begin{theorem}\label{thm:iterative}
For every positive integer $k$ and every nonnegative integer $m$, as $n\to\infty$,
\[
\tilde{r}(\kk,P_n)
\le
\left[
\left(
\frac{2}{5}
+\frac{1}{10}\left(\frac{3}{8}\right)^m
\right)k
+
\left(
\frac{4}{5}
-\frac{3}{10}\left(\frac{3}{8}\right)^m
\right)
\right]n+o(n).
\]
\end{theorem}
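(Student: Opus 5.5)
The plan is induction on $m$. Write
\[
c_m=\left(\frac{2}{5}+\frac{1}{10}\left(\frac{3}{8}\right)^m\right)k+\left(\frac{4}{5}-\frac{3}{10}\left(\frac{3}{8}\right)^m\right).
\]
For $m=0$ this gives $c_0=\frac{k}{2}+\frac{1}{2}=\frac{k+1}{2}$, so the base case is exactly Corollary~\ref{corr:1}.

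A short computation shows that the $c_m$ satisfy the affine recursion
\[
c_{m+1}=\frac{k}{4}+\frac{1}{2}+\frac{3}{8}c_m .
\]
Its fixed point is $\frac{2k+4}{5}$. The deviation $c_0-\frac{2k+4}{5}=\frac{k-3}{10}$ is multiplied by $\frac{3}{8}$ at each step, which matches the closed form. So it suffices to prove the following: if $\tilde r(\kk,P_{n'})\le c_m n'+o(n')$ for all large $n'$, then the same bound holds with $c_{m+1}$.

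For the inductive step, let $N=\lceil (n-1)/2\rceil$. Then $2N+1\ge n$, and $P_{2N+1}$ contains $P_n$. Builder first runs the strategy of Lemma~\ref{lem:path} on fresh vertices to obtain a blue $P_N$ on $v_1,\dots,v_N$. Let $d_i$ be the red degrees at the end of this stage and $\mu=\frac{1}{N}\sum d_i\in[0,k-1]$. By Lemma~\ref{lem:path}, this stage costs at most $(1+\mu/2)N+O(k^2)$ rounds. Builder, knowing $\mu$ because the game is adaptive, then chooses the cheaper of two continuations.
\begin{itemize}
\item[(a)] Apply Lemma~\ref{lem:dub} to extend to a blue $P_{2N+1}$ in at most $(2k-2\mu)N$ further rounds.
\item[(b)] On fresh vertices, play the inductive strategy for $P_N$, at cost $c_mN+o(N)$, and then join the two paths with Lemma~\ref{lem:fus}, at cost $3k-1=O(1)$, obtaining $P_{2N+1}$.
\end{itemize}
Any red $\kk$ produced along the way ends the game in Builder's favour. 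The red degrees created in stage (b) are irrelevant, since only the first path's degrees feed into Lemma~\ref{lem:dub}.

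The total is therefore at most $N\,f(\mu)+o(n)$, where
\[
f(\mu)=1+\frac{\mu}{2}+\min\{2k-2\mu,\;c_m\}.
\]
We bound this by $\max_{\mu\in\mathbb{R}}f(\mu)$. The first branch $1+\mu/2+2k-2\mu$ is decreasing in $\mu$, and the second branch $1+\mu/2+c_m$ is increasing. Hence the maximum of the minimum occurs where the two branches cross, at $\mu^*=k-c_m/2$. There
\[
f(\mu^*)=1+\frac{k}{2}+\frac{3}{4}c_m .
\]
Since $N=n/2+O(1)$, the total is
\[
\left(\frac{1}{2}+\frac{k}{4}+\frac{3}{8}c_m\right)n+o(n)=c_{m+1}n+o(n),
\]
which completes the induction.

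The main point needing care is the bookkeeping of lower-order terms. The $(k-1)^2$ term and the $3k-1$ term are $O(1)$ because $k$ is fixed. The inductive hypothesis is only asymptotic, so we use $c_mN+o(N)=c_m n/2+o(n)$. Parity is absorbed by taking $N=\lceil (n-1)/2\rceil$, since a longer blue path contains $P_n$. Finally, the red degrees $d_i$ used in Lemma~\ref{lem:dub} must be measured after the path is built, which is exactly the quantity controlled by the second bound in Lemma~\ref{lem:path}. Everything else is the one-variable optimization above.
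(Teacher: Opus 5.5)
Your proposal is correct and is essentially the paper's proof: the same first stage via Lemma~\ref{lem:path}, and the same dichotomy between Lemma~\ref{lem:dub} and the inductive strategy followed by Lemma~\ref{lem:fus}. Your crossing point $\mu^*=k-c_m/2$ (where your $c_m$ is the paper's full coefficient $\beta_m k+c_m$) is exactly the paper's threshold $\alpha_m k$, obtained by optimization rather than stated in advance. The one detail you leave implicit is the round count when a red $K_{1,k}$ appears before the first path is finished; the first bound of Lemma~\ref{lem:path} covers it, since $\frac{k+1}{2}N+O(1)=\frac{k+1}{4}n+O(1)$ and your recursion gives $c_{m+1}\ge\frac{k}{4}+\frac{1}{2}>\frac{k+1}{4}$.
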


\begin{proof}
For $t\ge 0$, define
\[
\beta_t=
\frac{2}{5}
+\frac{1}{10}\left(\frac{3}{8}\right)^t
\qquad\text{and}\qquad
c_t=
\frac{4}{5}
-\frac{3}{10}\left(\frac{3}{8}\right)^t.
\]
We prove the result by induction on $m$. The case $m=0$ follows from Corollary \ref{corr:1}.

Suppose now that the result holds for some $m\ge 0$. Thus, for every positive integer $n$, Builder can force either a red copy of $\kk$ or a blue copy of $P_n$ in at most
\[
(\beta_m k+c_m)n+o(n)
\]
rounds.

We show that the result also holds for $m+1$. Let $N$ be a positive integer and set
\[
n=\left\lceil\frac{N-1}{2}\right\rceil.
\]
Then
\[
2n+1\ge N
\qquad\text{and}\qquad
2n+1=N+O(1).
\]
We construct a Builder strategy that forces either a red copy of $\kk$ or a blue copy of $P_{2n+1}$ within
\[
(\beta_{m+1}k+c_{m+1}+o(1))(2n+1)
\]
rounds. Since $P_{2n+1}$ contains a copy of $P_N$, this will yield the desired bound for $P_N$.

The strategy consists of two phases.

In Phase 1, Builder first plays a strategy which either forces a red copy of $\kk$ in at most
\[
\frac{1}{2}(n+k-1)(k-1)+n-1
\]
rounds or a blue copy of $P_n$ on vertices $v_1,\ldots,v_n$ with red degrees $d_1,\ldots,d_n$ in at most
\[
\frac{1}{2}\left[\sum_{i=1}^n d_i+(k-1)^2\right]+n-1
\]
rounds. Since
\[
\frac{1}{2}(n+k-1)(k-1)+n-1
=
\left(\frac{k+1}{2}\right)n+o(n)
=
\left(\frac{k+1}{4}+o(1)\right)(2n+1),
\]
we may assume that the strategy results in the blue copy of $P_n$.
Let $\mu$ be the average red degree on the vertices of the blue path, that is,
\[
\mu=\frac{1}{n}\sum_{i=1}^n d_i.
\]

In Phase 2, we consider two cases, according to whether
$\mu>\alpha_mk$ or $\mu\le\alpha_mk$, where
\[
\alpha_m
=
1-\frac{\beta_m}{2}-\frac{c_m}{2k}.
\]
Let $R_1$ and $R_2$ denote the total number of rounds in the first and second cases, respectively.

Suppose first that $\mu>\alpha_mk$. By Lemma \ref{lem:dub}, Builder can force either a red copy of $\kk$ or a blue copy of $P_{2n+1}$ in at most
\[
2kn-2\sum_{i=1}^n d_i
\]
additional rounds. Then
\begin{align*}
R_1
&\le
\frac{1}{2}\left[\sum_{i=1}^n d_i+(k-1)^2\right]+n-1
+2kn-2\sum_{i=1}^n d_i\\
&=(2k+1)n-\frac{3}{2}\mu n+o(n)\\
&<(2k+1)n-\frac{3}{2}\alpha_mkn+o(n)\\
&=2(\beta_{m+1}k+c_{m+1})n+o(n)\\
&=(\beta_{m+1}k+c_{m+1}+o(1))(2n+1).
\end{align*}

Suppose instead that $\mu\le\alpha_mk$. By the induction hypothesis, Builder can force either a red copy of $\kk$ or a blue copy of $P_n$ in at most
\[
(\beta_m k+c_m)n+o(n)
\]
rounds. If a red copy of $\kk$ is forced, then Builder wins and we are done. Hence, we may assume that a blue copy of $P_n$ is obtained. By Lemma \ref{lem:fus}, Builder can force either a red copy of $\kk$ or a blue copy of $P_{2n+1}$ in at most $3k-1$ additional rounds. Therefore,
\begin{align*}
R_2
&\le
\frac{1}{2}\left[\sum_{i=1}^n d_i+(k-1)^2\right]+n-1
+(\beta_m k+c_m)n+o(n)+3k-1\\
&\le
(\beta_m k+c_m+1)n+\frac{1}{2}\mu n+o(n)\\
&\le
(\beta_m k+c_m+1)n+\frac{1}{2}\alpha_mkn+o(n)\\
&=
\left[
\left(\frac{3}{4}\beta_m+\frac{1}{2}\right)k
+\frac{3}{4}c_m+1
\right]n+o(n)\\
&=2(\beta_{m+1}k+c_{m+1})n+o(n)\\
&=(\beta_{m+1}k+c_{m+1}+o(1))(2n+1).
\end{align*}

Thus, in either case, Builder can force either a red copy of $\kk$ or a blue copy of $P_{2n+1}$ within
\[
(\beta_{m+1}k+c_{m+1}+o(1))(2n+1)
\]
rounds. Since $2n+1\ge N$, the blue path contains a copy of $P_N$. Moreover,
\[
2n+1=N+O(1),
\]
and hence
\[
(\beta_{m+1}k+c_{m+1}+o(1))(2n+1)
=
(\beta_{m+1}k+c_{m+1})N+o(N).
\]
Therefore,
\[
\tilde r(\kk,P_N)
\le
(\beta_{m+1}k+c_{m+1})N+o(N),
\]
which completes the induction.
\end{proof}

Taking $m\to\infty$ in Theorem~\ref{thm:iterative}, we have \[ \beta_m\to\frac25 \qquad\text{and}\qquad c_m\to\frac45. \] Therefore, \[ \tilde{r}(\kk,P_n) \le \left(\frac{2k+4}{5}+o(1)\right)n, \] which proves Theorem~\ref{mainthm}.

%% file: fig1.tikz
\begin{tikzpicture}[scale=1.2]

\node[vertex] (a1) at (2,0) {};
\node[vertex] (a2) at (3,0) {};
\node (adots) at (3.5,0) {$\cdots$};
\node[vertex] (a3) at (4,0) {};
\node[vertex] (v)  at (5,0) {};

\draw[blueedge] (a1)--(adots)--(a3)--(v);

\node at (3.5,-0.45) {$P_n$};
\node[below=6pt] at (v) {$v$};

\node[vertex] (u)  at (7,0) {};
\node[vertex] (b1) at (8,0) {};
\node[vertex] (b2) at (9,0) {};
\node (bdots) at (8.5,0) {$\cdots$};
\node[vertex] (b3) at (10,0) {};

\draw[blueedge] (u)--(b1)--(bdots)--(b3);

\node at (8.5,-0.45) {$P_m$};
\node[below=6pt] at (u) {$u$};

\node[newvertex] (w) at (4.5,1.35) {};
\node[above=6pt] at (w) {$w$};

\node[newvertex] (r0) at (5,1.35) {};
\node[newvertex] (r1) at (5.5,1.35) {};
\node[newvertex] (r2) at (6,1.35) {};
\node[newvertex] (r3) at (6.5,1.35) {};
\node[newvertex] (r4) at (7,1.35) {};
\node[newvertex] (r5) at (7.5,1.35) {};

\draw[blueedge] (v)--(w)--(u);

\draw[blueedge] (u)--(r0);
\draw[blueedge] (u)--(r1);
\draw[blueedge] (u)--(r2);
\draw[rededge] (u)--(r3);
\draw[rededge] (u)--(r4);
\draw[rededge] (u)--(r5);
\draw[rededge] (v)--(r0);
\draw[rededge] (v)--(r1);
\draw[rededge] (v)--(r2);

\draw[decorate,decoration={brace,amplitude=5pt}]
($(r0.north west)+(0,0.15)$) -- ($(r2.north east)+(0,0.15)$)
node[midway,above=6pt] {$\scriptstyle k-1$};

\draw[decorate,decoration={brace,amplitude=5pt}]
($(r3.north west)+(0,0.15)$) -- ($(r5.north east)+(0,0.15)$)
node[midway,above=6pt] {$\scriptstyle k-1$};

\end{tikzpicture}

%% file: fig3.tikz
\begin{tikzpicture}[scale=0.93]

\tikzset{
    vertex/.style={circle, fill=black, inner sep=1.6pt},
    blueedge/.style={blue, very thick},
    rededge/.style={red, very thick},
    lightblueedge/.style={blue, opacity=.25, thin},
    lightrededge/.style={red, opacity=.25, thin}
}

\node[vertex,label=above:$v_1$] (v1) at (0,0) {};
\node[vertex,label=above:$v_2$] (v2) at (3,0) {};
\node[vertex,label=above:$v_3$] (v3) at (6,0) {};
\node[vertex,label=above:$v_n$] (vn) at (10,0) {};

\draw[blueedge] (v1)--(v2)--(v3)--(7,0);
\draw[blueedge] (9,0)--(vn);

\node at (8,0) {$\cdots$};

\node[newvertex] (a1) at (-0.7,-1.8) {};
\node[newvertex] (a2) at (0.4,-1.8) {};
\node[newvertex,label= right:$b_0^1$] (a3) at (0.8,-1.8) {};

\foreach \x in {-0.65,-0.55,...,0.35}
    \draw[lightrededge] (v1) -- (\x,-1.8);

\draw[rededge] (v1)--(a1);
\draw[rededge] (v1)--(a2);
\draw[blueedge] (v1)--(a3);

\draw[decorate,decoration={brace,amplitude=5pt}]
(0.8,-2.) -- (-0.7,-2.)
node[midway,below=5pt] {$\scriptstyle k-d_1$};

\node[newvertex] (b1) at (2.0,-1.8) {};
\node[newvertex] (b2) at (3.2,-1.8) {};
\node[newvertex,label=right:$b_0^2$] (b3) at (3.6,-1.8) {};

\foreach \x in {2.05,2.15,...,3.15}
    \draw[lightblueedge] (v1) -- (\x,-1.8);
\foreach \x in {2.05,2.15,...,3.15}
    \draw[lightrededge] (v2) -- (\x,-1.8);

\draw[blueedge] (v1)--(b1);
\draw[blueedge] (v1)--(b2);
\draw[blueedge] (v1)--(b3);

\draw[rededge] (v2)--(b1);
\draw[rededge] (v2)--(b2);
\draw[blueedge] (v2)--(b3);

\draw[decorate,decoration={brace,amplitude=5pt}]
(3.6,-2) -- (2,-2)
node[midway,below=5pt] {$\scriptstyle k-d_2$};

\node[newvertex] (c1) at (5.1,-1.8) {};
\node[newvertex] (c2) at (6.3,-1.8) {};
\node[newvertex,label=right:$b_0^3$] (c3) at (6.7,-1.8) {};

\foreach \x in {5.15,5.25,...,6.25}
    \draw[lightblueedge] (v2) -- (\x,-1.8);
\foreach \x in {5.15,5.25,...,6.25}
    \draw[lightrededge] (v3) -- (\x,-1.8);

\draw[blueedge] (v2)--(c1);
\draw[blueedge] (v2)--(c2);
\draw[blueedge] (v2)--(c3);

\draw[rededge] (v3)--(c1);
\draw[rededge] (v3)--(c2);
\draw[blueedge] (v3)--(c3);

\draw[decorate,decoration={brace,amplitude=5pt}]
(6.7,-2) -- (5.1,-2)
node[midway,below=5pt] {$\scriptstyle k-d_3$};

\node at (8.0,-1.8) {$\cdots$};

\node[newvertex] (d1) at (9.1,-1.8) {};
\node[newvertex] (d2) at (10.2,-1.8) {};
\node[newvertex,label=right:$b_0^n$] (d3) at (10.6,-1.8) {};
\node[newvertex,label=right:$b_1^n$] (d4) at (12,-1.8) {};

\foreach \x in {0.1,0.2,...,0.9}
    \draw[lightblueedge] (9,-1.7+0.5*\x) -- (9.15+\x,-1.8);
\foreach \x in {9.15,9.25,...,10.15}
    \draw[lightrededge] (vn) -- (\x,-1.8);

\draw[rededge] (vn)--(d1);
\draw[rededge] (vn)--(d2);
\draw[blueedge] (vn)--(d3);
\draw[blueedge] (vn)--(d4);
\draw[blueedge] (v3) -- (7,-0.5);

\draw[blueedge] (9,-1)--(d3);
\draw[blueedge] (9,-1.2)--(d2);
\draw[blueedge] (9,-1.7)--(d1);

\draw[decorate,decoration={brace,amplitude=5pt}]
(10.6,-2) -- (9.1,-2)
node[midway,below=5pt] {$\scriptstyle k-d_n$};

\end{tikzpicture}

%% file: concluding.tex
\section{Concluding remarks}

In this paper, we improved the asymptotic upper bounds for the online Ramsey
numbers of a fixed star versus a long path or cycle. In particular, for every
fixed positive integer $k$, we proved
\[
\tilde{r}(K_{1,k},P_n)
\le
\left(\frac{2k+4}{5}\right)n+o(n),
\]
and, consequently,
\[
\tilde{r}(K_{1,k},C_n)
\le
\left(\frac{2k+4}{5}\right)n+o(n).
\]
Together with the previously known lower bounds, this gives
\[
\left(\frac{k+3}{4}+o(1)\right)n
\le
\tilde{r}(K_{1,k},P_n),\,
\tilde{r}(K_{1,k},C_n)
\le
\left(\frac{2k+4}{5}+o(1)\right)n.
\]

For each fixed $k$, Proposition~\ref{prop:limit} shows that the path and
cycle problems have the same asymptotic coefficient. It is therefore natural
to define
\[
L_k
=
\lim_{n\to\infty}
\frac{\tilde{r}(K_{1,k},P_n)}{n}
=
\lim_{n\to\infty}
\frac{\tilde{r}(K_{1,k},C_n)}{n}.
\]
Our results imply
\[
\frac{k+3}{4}
\le
L_k
\le
\frac{2k+4}{5}.
\]
Determining $L_k$ remains an interesting open problem. In particular, it
would be worthwhile to determine whether the known lower bound is
asymptotically sharp for values of $k$ beyond $k=3$, or whether further
improvements to both the Builder and Painter strategies are possible.
